\numberwithin{equation}{section}
\newtheorem{Theorem}{Theorem}[section]
\newtheorem{Proposition}[Theorem]{Proposition}
 { \theoremstyle{definition}

\newtheorem{Remark}[Theorem]{Remark} }
\begin{document}

\newcommand{\arXivNumber}{2003.13512}

\renewcommand{\PaperNumber}{014}

\FirstPageHeading

\ShortArticleName{The Subelliptic Heat Kernel of the Octonionic Anti-De Sitter Fibration}

\ArticleName{The Subelliptic Heat Kernel\\ of the Octonionic Anti-De Sitter Fibration}

\Author{Fabrice BAUDOIN and Gunhee CHO}

\AuthorNameForHeading{F.~Baudoin and G.~Cho}

\Address{Department of Mathematics, University of Connecticut,\\ 196 Auditorium Road, Storrs, CT 06269-3009, USA}
\Email{\href{mailto:fabrice.baudoin@uconn.edu}{fabrice.baudoin@uconn.edu}, \href{mailto:gunhee.cho@uconn.edu}{gunhee.cho@uconn.edu}}

\ArticleDates{Received July 31, 2020, in final form January 29, 2021; Published online February 10, 2021}

\Abstract{In this note, we study the sub-Laplacian of the $15$-dimensional octonionic anti-de Sitter space which is obtained by lifting with respect to the anti-de Sitter fibration the Laplacian of the octonionic hyperbolic space $\mathbb{O}H^1$. We also obtain two integral representations for the corresponding subelliptic heat kernel.}

\Keywords{sub-Laplacian; 15-dimensional octonionic anti-de Sitter space; the anti-de Sitter fibration}

\Classification{58J35; 53C17}

\section{Introduction and results}

In this note we study the sub-Laplacian and the corresponding sub-Riemannian heat kernel of the octonionic anti-de Sitter fibration
\begin{equation*}
\mathbb{S}^7 \hookrightarrow {\rm AdS}^{15}(\mathbb{O}) \rightarrow \mathbb{O}H^1.
\end{equation*}

This paper follows the previous works \cite{FBND18,FBJW18, JW16} which respectively concerned:
\begin{enumerate}\itemsep=0pt
\item {\it The complex anti-de Sitter fibrations}:
\[
\mathbb{S}^1 \hookrightarrow {\rm AdS}^{2n+1}(\mathbb{C}) \rightarrow {\mathbb{C}H}^n.
\]
\item {\it The quaternionic anti-de Sitter fibrations}:
\[
\mathbb{S}^3 \hookrightarrow {\rm AdS}^{4n+3}(\mathbb{H}) \rightarrow {\mathbb{H}H}^n.
\]
\end{enumerate}

The 15-dimensional anti-de Sitter fibration is the last model space that remained to be studied of a sub-Riemannian manifold arising from a $H$-type semi-Riemannian submersion over a rank-one symmetric space, see the Table~3 in~\cite{BGMR}.

Similarly to the complex and quaternionic case, the sub-Laplacian is defined as the lift on ${\rm AdS}^{15}(\mathbb{O})$ of the Laplace--Beltrami operator of the octonionic hyperbolic space $\mathbb{O}H^1$. However, in the complex and quaternionic case the Lie group structure of the fiber played an important role that we can not use here, since the fiber $\mathbb{S}^7$ is not a group. Instead, we make use of some algebraic properties of $\mathbb S^7$ that were already pointed out and used by the authors in~\cite{FBGC19} for the study of the octonionic Hopf fibration:
\begin{equation*}
\mathbb{S}^7 \hookrightarrow \mathbb{S}^{15} \rightarrow \mathbb{O}P^1.
\end{equation*}
Let us briefly describe our main results. Due to the cylindrical symmetries of the fibration, the heat kernel of the sub-Laplacian only depends on two variables: the variable $r$ which is the Riemannian distance on $\mathbb{O}H^1$ (the starting point is specified with inhomogeneous coordinate in Section~\ref{section3}) and the variable~$\eta$ which is the Riemannian distance starting at a pole on the fiber~$\mathbb{S}^7$. We prove in Proposition~\ref{prop:1} that in these coordinates, the radial part of the sub-Laplacian $\tilde{L}$ writes
\begin{equation*}
\tilde{L}=\frac{{\partial}^2}{\partial {r}^2}+(7\coth r+7\tanh r)\frac{{\partial}}{\partial {r}}+{\tanh^2 r}\left(\frac{{\partial}^2}{\partial {\eta}^2}+6\cot \eta\frac{{\partial}}{\partial {\eta}}\right).
\end{equation*}

As a consequence of this expression for the sub-Laplacian, we are able to derive two equivalent formulas for the heat kernel. The first formula, see Proposition~\ref{prop:2}, reads as follows: for $r\geq 0$, $\eta\in [0,\pi)$, $t>0$
\begin{align*}
p_t(r,\eta)=\int_{0}^{\infty} s_t(\eta , {\rm i}u) q_{t,15}(\cosh r \cosh u) \sinh^6u \, {\rm d}u,
\end{align*}
where $s_t$ is the heat kernel of the Jacobi operator
\begin{equation*}
\tilde{\triangle}_{\mathbb{S}^7}=\frac{{\partial}^2}{\partial {\eta}^2}+6\cot \eta\frac{{\partial}}{\partial {\eta}}
\end{equation*}
with respect to the measure $\sin^6\eta \, {\rm d}\eta$, and where $q_{t,15}$ is the Riemannian heat kernel on the $15$-dimensional real hyperbolic space $\mathbb{H}^{15}$ given in~\eqref{eq:riem.heat.kernel.on.H^15}.
The second formula, see Proposition~\ref{prop:3}, writes as follows:
\begin{gather*}
p_t(r,\eta)= \int_0^\pi \int_{0}^{\infty} G_t(\eta, \varphi, u) q_{t,9}(\cosh r \cosh u) \sin^5 \varphi \, {\rm d}u \, {\rm d} \varphi ,
\end{gather*}
where $q_{t,9}$ is Riemannian heat kernel on the $9$-dimensional hyperbolic space $\mathbb{H}^{9}$ and $G_t(\eta, \varphi, u)$ is given in~\eqref{function G}.

Similarly to \cite{FBND18,FBJW18,JW16}, it might be expected that explicit integral representations of the heat kernel might be used to study small-time asymptotics, inside and outside of the cut-locus. Integral representations of heat kernels can also be used to obtain sharp heat kernel estimates, see~\cite{MR2541147}. Those applications of the heat kernel representations we obtain will possibly be addressed in a future research project.

\section{The octonionic anti-de Sitter fibration}\label{section2}

Let
\begin{equation*}
\mathbb{O}=\left\{x=\sum_{j=0}^{7}x_j e_j,\, x_j\in \mathbb{R} \right\},
\end{equation*}
be the division algebra of octonions (see~\cite{MR1803808} for explicit representations of this algebra). We recall that the multiplication rules are given by
\begin{gather*}
e_i e_j=e_j \qquad \text{if} \quad i=0,
\\
e_i e_j=e_i \qquad \text{if} \quad j=0,
\\
e_i e_j=-\delta_{ij}e_0+\epsilon_{ijk}e_k \qquad \text{otherwise},
\end{gather*}
where $\delta_{ij}$ is the Kronecker delta and $\epsilon_{ijk}$ is the completely antisymmetric tensor with value $1$ when $ijk = 123, 145, 176, 246, 257, 347, 365$ (also see~\cite{FBGC19}).
The octonionic norm is defined for $x \in \mathbb{O}$ by
\begin{equation*}
||x||^2=\sum_{j=0}^{7}x^2_j.
\end{equation*}

The octonionic anti-de Sitter space ${\rm AdS}^{15}(\mathbb{O})$ is the quadric defined as the pseudo-hyperbolic space by:
\begin{equation*}
{\rm AdS}^{15}(\mathbb{O})=\big\{(x,y)\in\mathbb{O}^2, \, ||(x,y)||^2_{\mathbb{O}}=-1 \big\},
\end{equation*}
where
\begin{equation*}
||(x,y)||^2_{\mathbb{O}}:=||x||^2-||y||^2.
\end{equation*}

In real coordinates we have $x=\sum_{j=0}^{7}x_j e_j$, $y=\sum_{j=0}^{7}y_j e_j$, and the pseudo-norm can be written as
\begin{equation*}
x_0^2+\cdots +x_7^2-y_0^2-\cdots-y_7^2.
\end{equation*}
As such, ${\rm AdS}^{15}(\mathbb{O})$ is embedded in the flat $16$-dimensional space~$\mathbb{R}^{8,8}$ endowed with the Lo\-rentzian real signature $(8,8)$ metric
\begin{equation*}
{\rm d}s^2={\rm d}x_0^2+\dots+{\rm d}x_7^2-{\rm d}y_0^2-\cdots-{\rm d}y_7^2.
\end{equation*}
Consequently, ${\rm AdS}^{15}(\mathbb{O})$ is naturally endowed with a pseudo-Riemannian structure of signature~$(8,7)$.

Let $\mathbb{O}H^1$ denote the octonionic hyperbolic space. The map $\pi\colon {\rm AdS}^{15}(\mathbb{O}) \rightarrow \mathbb{O}H^1$, given by $(x,y) \mapsto [x:y]=y^{-1}x$ is a pseudo-Riemannian submersion with totally geodesic fibers isometric to the seven-dimensional sphere $\mathbb S^7$. Notice that, as a topological manifold, $\mathbb{O}H^1$ can therefore be identified with the unit open ball in $\mathbb{O}$. The pseudo-Riemannian submersion~$\pi$ yields the octonionic anti-de Sitter fibration
\begin{equation*}
\mathbb{S}^7 \hookrightarrow {\rm AdS}^{15}(\mathbb{O}) \rightarrow \mathbb{O}H^1.
\end{equation*}
For further information on semi-Riemannian submersions over rank-one symmetric spaces, we refer to~\cite{MR1877586}.

\section{Cylindrical coordinates and radial part of the sub-Laplacian}\label{section3}

The sub-Laplacian $L$ on ${\rm AdS}^{15}(\mathbb{O})$ we are interested in is the horizontal Laplacian of the Riemannian submersion $\pi\colon {\rm AdS}^{15}(\mathbb{O}) \to \mathbb{O}H^1$, i.e., the horizontal lift of the Laplace--Beltrami operator of $\mathbb{O}H^1$. It can be written as
\begin{equation}\label{eq:sub_Laplacian}
L=\square_{{\rm AdS}^{15}(\mathbb{O})}+\triangle_\mathcal{V},
\end{equation}
where $\square_{{\rm AdS}^{15}(\mathbb{O})}$ is the d'Alembertian, i.e., the Laplace--Beltrami operator of the pseudo-Rie\-man\-nian metric and $\triangle_\mathcal{V}$ is the vertical Laplacian. Since the fibers of $\pi$ are totally geodesic and isometric to $\mathbb{S}^7\subset {\rm AdS}^{15}(\mathbb{O})$, we note that $\square_{{\rm AdS}^{15}(\mathbb{O})}$ and $\triangle_\mathcal{V}$ are commuting operators, and we can identify
\begin{equation}\label{eq:vertical_Laplacian}
\triangle_\mathcal{V}={\triangle}_{\mathbb{S}^{7}}.
\end{equation}

The sub-Laplacian $L$ is associated with a canonical sub-Riemannian structure on ${\rm AdS}^{15}(\mathbb{O})$ which is of $H$-type, see~\cite{BGMR}.

To study $L$, we introduce a set of coordinates that reflect the cylindrical symmetries of the octonionic unit sphere which provides an explicit local trivialization of the octonionic anti-de Sitter fibration. Consider the coordinates $w\in \mathbb{O}H^1$, where $w$ is the inhomogeneous coordinate on $\mathbb{O}H^1$ given by $w=y^{-1} x$, with $x,y\in {\rm AdS}^{15}(\mathbb{O})$. Consider the north pole $p \in \mathbb{S}^7$ and take $Y_1, \dots , Y_7$ to be an orthonormal frame of $T_p \mathbb{S}^7$. Let us denote $\exp_p$ the Riemannian exponential map at $p$ on $\mathbb{S}^7$. Then the cylindrical coordinates we work with are given by
\begin{equation*}
(w,\theta_1, \dots , \theta_7) \mapsto \left(\frac{ \exp_p\big(\sum_{i=1}^{7}\theta_i Y_i\big) w}{\sqrt{1-{\rho}^2}},\frac{ \exp_p\big(\sum_{i=1}^{7}\theta_i Y_i\big)}{\sqrt{1-{\rho}^2}}\right)\in {\rm AdS}^{15}(\mathbb{O}),
\end{equation*}
where $\rho={\|w\|}$ and $\| \theta \|=\sqrt{\theta_1^2+\cdots + \theta_7^2} <\pi$.

A function $f$ on ${\rm AdS}^{15}(\mathbb{O})$ is called radial cylindrical if it only depends on the two coordinates $(\rho,\eta)\in [0,1)\times [0,\pi]$ where $\eta=\sqrt{\sum_{i=1}^{7}\theta^2_i}$. More precisely $f$ is radial cylindrical if there exists a function $g$ so that
\begin{equation*}
f \left(\frac{ \exp_p\big(\sum_{i=1}^{7}\theta_i Y_i\big) w}{\sqrt{1-{\rho}^2}},\frac{ \exp_p\big(\sum_{i=1}^{7}\theta_i Y_i\big)}{\sqrt{1-{\rho}^2}}\right)=g ( \rho, \eta ).
\end{equation*}

We denote by $\mathcal{D}$ the space of smooth and compactly supported functions on $[0,1) \times [0,\pi)$. Then the radial part of $L$ is defined as the operator $\widetilde{L}$ such that for any~$f\in \mathcal{D}$, we have
\begin{equation}\label{intertwining}
L(f \circ \psi)=\big(\widetilde{L}f\big)\circ \psi.
\end{equation}

We now compute $\widetilde{L}$ in cylindrical coordinates.

\begin{Proposition}\label{prop:1}
The radial part of the sub-Laplacian on ${\rm AdS}^{15}(\mathbb{O})$ is given in the coordinates $(r,\eta)$ by the operator
\begin{equation*}
\widetilde{L}=\frac{{\partial}^2}{\partial {r}^2}+(7\coth r+7\tanh r)\frac{{\partial}}{\partial {r}}+{\tanh^2 r}\left(\frac{{\partial}^2}{\partial {\eta}^2}+6\cot \eta\frac{{\partial}}{\partial {\eta}}\right),
\end{equation*}
where $r=\tanh^{-1}\rho$ is the Riemannian distance on~$\mathbb{O}H^1$ from the origin.
\end{Proposition}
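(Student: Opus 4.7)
The strategy is to use the decomposition~\eqref{eq:sub_Laplacian} together with~\eqref{eq:vertical_Laplacian}, namely $L = \square_{{\rm AdS}^{15}(\mathbb{O})} + \Delta_{\mathbb{S}^7}$, and to compute each summand separately on a radial cylindrical function $f = g\circ\psi$. The vertical piece is essentially immediate: by construction $\eta$ is the Riemannian distance on $\mathbb{S}^7$ from the pole $p$, and $f$ is constant on the geodesic spheres centered at $p$ in each fiber, so the restriction of $\Delta_{\mathbb{S}^7}$ to radial cylindrical functions is the polar part
\[
\widetilde{\Delta}_{\mathbb{S}^7} = \frac{\partial^2}{\partial\eta^2} + 6\cot\eta\,\frac{\partial}{\partial\eta},
\]
the coefficient $6$ being the dimension of the geodesic spheres in~$\mathbb{S}^7$.

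The substance of the proof is the radial part of the d'Alembertian $\square$. The plan is to adapt the pull-back method of~\cite{FBGC19}, which handles the compact dual (the octonionic Hopf fibration), to the present pseudo-Riemannian setting. One pulls back the flat signature-$(8,8)$ metric of $\mathbb{R}^{8,8}$ through the cylindrical chart $\psi$ and reads off the induced pseudo-Riemannian metric on ${\rm AdS}^{15}(\mathbb{O})$ in the coordinates $(w,\theta_1,\dots,\theta_7)$. Radial cylindrical functions depend only on $\rho = \|w\|$ and $\eta = \|\theta\|$, so all derivatives in the remaining (angular) coordinate directions annihilate them, and the computation reduces to extracting the $(\rho,\eta)$-block of the pull-back together with the corresponding volume factor. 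The essential octonionic input is the identity $\|\Theta w\| = \|w\|$ for $\Theta\in\mathbb{S}^7$ and the associated differential identities established in~\cite{FBGC19}; these replace the multiplicative manipulations that the Lie-group structure of the fiber made available in the complex~\cite{JW16} and quaternionic~\cite{FBJW18} cases.

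The final step is the substitution $r = \tanh^{-1}\rho$, which identifies $r$ with the Riemannian distance on $\mathbb{O}H^1$ from the origin. The expected output of the pull-back computation is
\[
\widetilde{\square}\,g = \frac{\partial^2 g}{\partial r^2} + (7\coth r + 7\tanh r)\,\frac{\partial g}{\partial r} - \frac{1}{\cosh^2 r}\left(\frac{\partial^2 g}{\partial\eta^2} + 6\cot\eta\,\frac{\partial g}{\partial\eta}\right),
\]
where the $\partial_r^2 + 7\coth r\,\partial_r$ piece is the radial Laplace--Beltrami of $\mathbb{O}H^1$, the extra $7\tanh r\,\partial_r$ piece comes from the $\cosh r$-warping of the fibers along $r$, and the minus sign in front of the $\eta$-block reflects the timelike signature of the fibers inside $\mathbb{R}^{8,8}$. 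Adding $\widetilde{\Delta}_{\mathbb{S}^7}$ from the first paragraph and using $1 - \cosh^{-2} r = \tanh^2 r$ yields the stated formula. The main obstacle is the pull-back step itself: without a Lie-group structure on $\mathbb{S}^7$, one has to track the differentials of $(w,\theta)\mapsto(\Theta w,\Theta)$ through the non-associativity of $\mathbb{O}$ and verify that all coordinate directions beyond $r$ and $\eta$ decouple from $g$, with the remaining $(r,\eta)$-block producing exactly the coefficients claimed.
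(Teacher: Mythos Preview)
Your overall strategy is sound and would lead to the result, but it differs from the route the paper actually takes, and the heart of your argument---the pull-back computation---is left as a plan rather than carried out.

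The paper does not pull back the ambient metric through the cylindrical chart. Instead, it argues structurally: since the fibration is a totally geodesic semi-Riemannian submersion, the radial d'Alembertian is known \emph{a priori} to have the warped-product form
\[
\widetilde{\square}_{{\rm AdS}^{15}(\mathbb{O})}=\widetilde{\triangle}_{\mathbb{O}H^1}+g(r)\,\widetilde{\triangle}_{\mathbb{S}^7}
\]
for some undetermined function $g(r)$. To pin down $g$, the paper exploits the isometric embedding ${\rm AdS}^{15}(\mathbb{O})\subset\mathbb{R}^{8,8}$ and tests on the single ambient coordinate function $f(x,y)=y_1$: on the one hand the flat d'Alembertian gives $\square_{{\rm AdS}^{15}(\mathbb{O})}y_1=15y_1$, while on the other $y_1=\cosh r\cos\eta$ in cylindrical coordinates, and matching the two forces $g(r)=-1/\cosh^2 r$. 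This bypasses entirely the non-associativity bookkeeping you flag as the main obstacle.

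Your approach, modeled on~\cite{FBGC19}, is more direct and more laborious: it would produce the full metric in cylindrical coordinates, not just the radial block, but at the cost of the octonionic differential identities you mention. Both routes are valid; the paper's test-function trick is shorter and avoids the hardest step of your plan. If you pursue your route, you should actually execute the pull-back rather than assert its expected output, since the heuristic you give for the coefficients (base radial part, warping term, timelike sign) is plausible but not a proof.
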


\begin{proof}
Note that the radial part of the Laplace--Beltrami operator on the octonionic hyperbolic space $\mathbb{O}H^1$ is
\begin{equation*}
\widetilde{\triangle}_{\mathbb{O}H^1}=\frac{{\partial}^2}{\partial {r}^2}+(7\coth r+7\tanh r)\frac{{\partial}}{\partial {r}},
\end{equation*}
and the radial part of the Laplace--Beltrami operator on $\mathbb{S}^{7}$ is
\begin{equation}\label{eq:LB_on_S^7}
\widetilde{\triangle}_{\mathbb{S}^7}=\frac{{\partial}^2}{\partial {\eta}^2}+6\cot \eta\frac{{\partial}}{\partial {\eta}}.
\end{equation}
Since the octonionic anti-de Sitter fibration defines a totally geodesic submersion with base space $\mathbb{O}H^1$ and fiber $\mathbb{S}^{7}$, the semi-Riemannian metric on ${\rm AdS}^{15}(\mathbb{O})$ is locally given by a warped product between the Riemannian metric of $\mathbb{O}H^1$ and the Riemannian metric on $\mathbb{S}^{7}$. Hence the radial part of the d'Alembertian becomes
\begin{equation}\label{radial 1}
\widetilde{\square}_{{\rm AdS}^{15}(\mathbb{O})}=\frac{{\partial}^2}{\partial {r}^2}+(7\coth r+7\tanh r)\frac{{\partial}}{\partial {r}} +g(r) \left(\frac{{\partial}^2}{\partial {\eta}^2}+6\cot \eta\frac{{\partial}}{\partial {\eta}} \right),
\end{equation}
for some smooth function $g$ to be computed.

On the other hand, from the isometric embedding ${\rm AdS}^{15}(\mathbb{O}) \subset \mathbb{O} \times \mathbb{O}$, the d'Alembertian on ${\rm AdS}^{15}(\mathbb{O})$ is a restriction of the d'Alembertian on $\mathbb{O} \times \mathbb{O} \simeq \mathbb{R}^{8,8}$ in the sense that for a smooth $f\colon {\rm AdS}^{15}(\mathbb{O}) \to \mathbb{R}$
\[
\square_{{\rm AdS}^{15}(\mathbb{O})} f =\square_{\mathbb O \times \mathbb O} f^*_{/ {\rm AdS}^{15}(\mathbb{O})},
\]
where $\square_{\mathbb O \times \mathbb O}=\sum_{i=0}^7 \big( \frac{\partial^2}{\partial x_i^2} - \frac{\partial^2}{\partial y_i^2} \big)$ and for $x,y \in \mathbb O$ such that $\| y\|^2 -\|x\|^2 >0$, $f^*(x,y)= f \big( \frac{x}{\sqrt{\| y \|^2 -\| x \|^2}}, \frac{y}{\sqrt{\| y \|^2 -\| x \|^2}}\big)$. For the specific choice of the function $f(x,y)=y_1$, one easily computes that $ \square_{\mathbb O \times \mathbb O} f^*_{/ {\rm AdS}^{15}(\mathbb{O})}(x,y) = 15y_1$, thus
\[
\square_{{\rm AdS}^{15}(\mathbb{O})} f (x,y)=15y_1.
\]
For the point with coordinates
\[
\left(\frac{ \exp_p\big(\sum_{i=1}^{7}\theta_i Y_i\big) w}{\sqrt{1-{\rho}^2}},\frac{ \exp_p\big(\sum_{i=1}^{7}\theta_i Y_i\big)}{\sqrt{1-{\rho}^2}}\right)\in {\rm AdS}^{15}(\mathbb{O})
\]
one has
\[
y_1=\frac{\cos \eta}{\sqrt{1-{\rho}^2}}= \cosh r \cos \eta.
\]
We therefore deduce that
\[
\widetilde{\square}_{{\rm AdS}^{15}(\mathbb{O})}(\cosh r \cos \eta) =15 \cosh r \cos \eta.
\]
Using the formula~\eqref{radial 1}, after a straightforward computation, this yields $g(r)=-\frac{1}{\cosh^2 r}$ and therefore
\begin{align*}
\widetilde{\square}_{{\rm AdS}^{15}(\mathbb{O})}&=\frac{{\partial}^2}{\partial {r}^2}+(7\coth r+7\tanh r)\frac{{\partial}}{\partial {r}} -\frac{1}{\cosh^2 r} \left(\frac{{\partial}^2}{\partial {\eta}^2}+6\cot \eta\frac{{\partial}}{\partial {\eta}} \right)\nonumber\\
& =\widetilde{\triangle}_{\mathbb{O}H^1}-\frac{1}{\cosh^2 r}\widetilde{\triangle}_{\mathbb{S}^7}.
\end{align*}

Finally, to conclude, one notes that the sub-Laplacian $L$ is given by the difference between the Laplace--Beltrami operator of ${\rm AdS}^{15}(\mathbb{O})$ and the vertical Laplacian. Therefore by~\eqref{eq:sub_Laplacian} and~\eqref{eq:vertical_Laplacian},
\begin{align*}
\widetilde{L} =\tilde{\square}_{{\rm AdS}^{15}(\mathbb{O})}+ \tilde{\triangle}_{\mathbb{S}^{7}}
 = \frac{{\partial}^2}{\partial {r}^2}+(7\coth r+7\tanh r)\frac{{\partial}}{\partial {r}}+{\tanh^2 r}\left(\frac{{\partial}^2}{\partial {\eta}^2}+6\cot \eta\frac{{\partial}}{\partial {\eta}}\right).\tag*{\qed}
\end{align*}\renewcommand{\qed}{}
\end{proof}

\begin{Remark}As a consequence of the previous result, we can check that the Riemannian measure of~${\rm AdS}^{15}(\mathbb{O})$ in the coordinates~$(r,\eta)$, which is the symmetric and invariant measure for~$\tilde{L}$ is given by
\begin{equation}\label{eq:measure}
{\rm d}\overline{\mu}=\frac{{\pi}^7}{90}\sinh^7 r \cosh^7r\sin^6 \eta \, {\rm d}r \,{\rm d}\eta.
\end{equation}
(See also Remark~2 in~\cite{FBGC19}, which corresponds to the case of the octonionic Hopf fibration.)
\end{Remark}

\section{Integral representations of the subelliptic heat kernel}\label{section4}

In this section, we give two integral representations of the subelliptic heat kernel associated with~$\tilde{L}$. We denote by $p_t(r,\eta)$ the heat kernel of~$\tilde{L}$ issued from the point $r=\eta=0$ with respect to the measure \eqref{eq:measure}. We remark that studying the subelliptic heat kernel associated with~$\tilde{L}$ is enough to study the heat kernel of~$L$, because due to \eqref{intertwining} the heat kernel $h_t(w,\theta)$ of~$L$ issued from the point with cylindric coordinates $w=0$, $\theta=0$ is then given by
\[
h_t(w,\theta)=p_t\big( \tanh^{-1} \| w \|, \| \theta \|\big).
\]

\subsection{First integral representation}\label{section4.1}

We denote by $s_t$ the heat kernel of the operator
\begin{equation*}
\tilde{\triangle}_{\mathbb{S}^7}=\frac{{\partial}^2}{\partial {\eta}^2}+6\cot \eta\frac{{\partial}}{\partial {\eta}}
\end{equation*}
with respect to the reference measure $\sin^6\eta \, {\rm d}\eta$. The operator $\tilde{\triangle}_{\mathbb{S}^7}$ belongs to the family of Jacobi diffusion operators which have been extensively studied in the literature, see for instance the appendix in~\cite{MR3719061} and the references therein. In particular, the spectrum of $\tilde{\triangle}_{\mathbb{S}^7}$ is given by
\[
\mathbf{Sp} \big({-} \tilde{\triangle}_{\mathbb{S}^7}\big)= \{ m(m+6),\, m \in \mathbb{N} \},
\]
and the eigenfunction corresponding to the eigenvalue $m(m+6)$ is $P^{5/2,5/2}_m (\cos \eta)$ where $P^{5/2,5/2}_m$ is the Jacobi polynomial
\[
P_m^{5/2,5/2}(x)=\frac{(-1)^m}{2^mm!\big(1-x^2\big)^{5/2}}\frac{{\rm d}^m}{{\rm d}x^m}\big(1-x^2\big)^{5/2+m}.
\]
As a consequence, one has the following spectral decomposition for the heat kernel:
\begin{gather*}
s_t(\eta,u)
 =\frac{1}{{\pi}} \sum_{m=0}^{+\infty} \frac{2^{4m+7} m !(m+5)![(m+3)!]^2}{(2m+6)!(2m+5)!} {\rm e}^{-m(m+6)t}P_m^{5/2,5/2}(\cos \eta )P_m^{5/2,5/2}(\cos u).
\end{gather*}
\begin{Proposition}\label{prop:2}
For $r\geq 0$, $\eta\in [0,\pi]$, and $t >0$ we have
\begin{equation*}
p_t(r,\eta)=\int_{0}^{\infty} s_t(\eta , {\rm i}u) q_{t,15}(\cosh r \cosh u) \sinh^6u \, {\rm d}u,
\end{equation*}
where
\begin{equation}\label{eq:riem.heat.kernel.on.H^15}
q_{t,15}(\cosh s):=\frac{{\rm e}^{-49t}}{(2\pi)^{7}\sqrt{4\pi t}}\left(-\frac{1}{\sinh s}\frac{{\rm d}}{{\rm d}s}\right)^{7}{\rm e}^{-s^2/4t}
\end{equation}
is the Riemannian heat kernel on the $15$-dimensional real hyperbolic space ${\mathbb{H}}^{15}$.
\end{Proposition}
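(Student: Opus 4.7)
Denote by $\phi_t(r,\eta) := \int_0^\infty s_t(\eta,{\rm i}u)\, q_{t,15}(\cosh r\cosh u)\sinh^6 u\,{\rm d}u$ the candidate right-hand side. I would show $\phi_t = p_t$ by verifying that $\phi_t$ solves the same Cauchy problem for $\widetilde L$ as $p_t$: the heat equation $\partial_t \phi_t = \widetilde L\phi_t$ on $(0,1)\times[0,\pi)$, together with $\phi_t\,{\rm d}\overline\mu\to\delta_{(0,0)}$ as $t\to 0^+$. Uniqueness of the heat kernel then forces $\phi_t = p_t$.

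\textbf{Heat equation.} The crucial input is that, by the symmetry of $s_t$ and the analytic-continuation identity $\cot({\rm i}u) = -{\rm i}\coth u$,
\begin{equation*}
\widetilde\triangle_{\mathbb S^7}^{\,\eta} s_t(\eta,{\rm i}u) \;=\; -L_{\mathbb H^7}^{\,u}\, s_t(\eta,{\rm i}u), \qquad L_{\mathbb H^7}^{\,u}:=\partial_u^2 + 6\coth u\,\partial_u,
\end{equation*}
where $L_{\mathbb H^7}^{\,u}$ is the radial Laplacian on $\mathbb H^7$. Differentiating $\phi_t$ under the integral sign, using this identity together with $\partial_t q_{t,15}(\cosh s) = L_{\mathbb H^{15}}^{\,s} q_{t,15}(\cosh s)$ with $L_{\mathbb H^{15}}^{\,s}:=\partial_s^2 + 14\coth s\,\partial_s$, and integrating by parts in $u$ (using the self-adjointness of $L_{\mathbb H^7}^{\,u}$ with respect to $\sinh^6 u\,{\rm d}u$) to transfer $L_{\mathbb H^7}^{\,u}$ from $s_t(\eta,{\rm i}u)$ onto $q_{t,15}$, the equality $\partial_t\phi_t = \widetilde L\phi_t$ reduces to the pointwise identity
\begin{equation*}
L_{\mathbb H^{15}}^{\,s}\, Q \;=\; \bigl(\partial_r^2 + 7(\coth r+\tanh r)\,\partial_r\bigr)\, Q \;+\; \frac{1}{\cosh^2 r}\,L_{\mathbb H^7}^{\,u}\, Q,
\end{equation*}
where $Q(r,u):=q_{t,15}(\cosh r\cosh u)$ and $\cosh s = \cosh r\cosh u$.

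\textbf{Key identity.} I would verify this by direct chain rule. Differentiating $\cosh s = \cosh r\cosh u$ gives $\sinh s\,\partial_r s = \sinh r\cosh u$ and hence $\partial_r^2 s = \coth s\cdot\sinh^2 u/\sinh^2 s$, with symmetric formulas in $u$. After substitution, the two elementary identities
\begin{equation*}
\sinh^2 r\cosh^2 u + \sinh^2 u \;=\; \sinh^2 s \;=\; \sinh^2 u\cosh^2 r + \sinh^2 r
\end{equation*}
collapse the coefficient of $q_{t,15}''(\cosh s)$ on the right-hand side to $1$, and the coefficient of $q_{t,15}'(\cosh s)$ telescopes, after using $14 = 7\cosh^2 r + 7\sinh^2 r + 6 + 1$ and $\cosh s = \cosh r\cosh u$, to $14\coth s$; this is precisely $L_{\mathbb H^{15}}^{\,s} q_{t,15}$ and is the step where the dimension arithmetic $8+7=15$ of the $H$-type fibration is used.

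\textbf{Initial condition and main obstacle.} For small $t$, $q_{t,15}(\cosh r\cosh u)$ concentrates at $r=u=0$, while the spectral series for $s_t(\eta,{\rm i}u)$ at $u=0$ reduces to the Jacobi heat kernel $s_t(\eta,0)$, which concentrates at $\eta=0$; pairing the weight $\sinh^6 u$ in the integrand against the Jacobian $\sinh^7 r\cosh^7 r\sin^6\eta$ in \eqref{eq:measure} then yields $\int f\,\phi_t\,{\rm d}\overline\mu\to f(0,0)$ for every $f\in C_c^\infty([0,1)\times[0,\pi))$. The main obstacle is the pointwise identity above, which requires the specific trigonometric cancellations tied to the dimensions of the fibration; a secondary concern is justifying the integration by parts at $u=\infty$, but this is controlled by the Gaussian factor $\exp(-s^2/4t)$ in $q_{t,15}$ dominating the polynomial growth $P_m^{5/2,5/2}(\cosh u)=O(\cosh^m u)$ of the Jacobi-polynomial terms in $s_t(\eta,{\rm i}u)$ at any fixed $t>0$.
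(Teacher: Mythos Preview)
Your argument is correct and complete in outline, but it is a genuinely different route from the paper's. The paper does not verify the heat equation directly. Instead it exploits the splitting $\widetilde L=\widetilde\square_{{\rm AdS}^{15}(\mathbb O)}+\widetilde\triangle_{\mathbb S^7}$ into two \emph{commuting} operators (the fibration is totally geodesic), so that ${\rm e}^{t\widetilde L}={\rm e}^{t\widetilde\triangle_{\mathbb S^7}}{\rm e}^{t\widetilde\square}$ and $p_t$ is an $\mathbb S^7$-convolution of $s_t$ against the d'Alembertian heat kernel. It then identifies the latter with $q_{t,15}$ via the Wick rotation $\eta\mapsto{\rm i}\eta$, which turns $\widetilde\square_{{\rm AdS}^{15}(\mathbb O)}$ into the radial Laplacian of $\mathbb H^{15}$; this is what produces both the analytic continuation $s_t(\eta,{\rm i}u)$ and the change of measure $\sin^6 u\,{\rm d}u\to\sinh^6 u\,{\rm d}u$ in one stroke.

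Your approach trades this operator-theoretic picture for a hands-on verification: the chain-rule identity under $\cosh s=\cosh r\cosh u$ that reassembles the $\mathbb H^{15}$ radial Laplacian from the $\mathbb OH^1$ radial part and $\frac{1}{\cosh^2 r}L_{\mathbb H^7}^{\,u}$ is exactly the infinitesimal shadow of the paper's Wick rotation, and your use of $\widetilde\triangle_{\mathbb S^7}^{\,\eta}s_t=-L_{\mathbb H^7}^{\,u}s_t$ together with self-adjointness of $L_{\mathbb H^7}^{\,u}$ replaces the commutation argument. The payoff of the paper's route is conceptual: it explains \emph{why} the formula has this shape and avoids any integration by parts or growth estimates. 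The payoff of yours is that it is entirely elementary and sidesteps having to make sense of the heat semigroup of the (non-elliptic) d'Alembertian or of the analytic continuation at the level of operators; everything reduces to a pointwise ODE identity plus standard heat-kernel uniqueness. (One cosmetic remark: your bookkeeping line ``$14=7\cosh^2 r+7\sinh^2 r+6+1$'' is not literally correct, but the underlying computation is---the $F'$-coefficient on the right collapses to $15\cosh r\cosh u=15\cosh s$, matching $L_{\mathbb H^{15}}^{\,s}$.)
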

\begin{proof}
Since $\pi\colon {\rm AdS}^{15}(\mathbb{O}) \to \mathbb{O}H^1$
is a (semi-Riemannian) totally geodesic submersion, the operators $\widetilde{{\square}}_{{\rm AdS}^{15}(\mathbb{O})}$ and $\widetilde{\triangle}_{\mathbb{S}^7}$ commute. Thus
\begin{equation*}
{\rm e}^{t\widetilde{L}}={\rm e}^{t(\widetilde{\square}_{{\rm AdS}^{15}(\mathbb{O})}+\widetilde{\triangle}_{\mathbb{S}^7})}={\rm e}^{t\widetilde{\triangle}_{\mathbb{S}^7}}{\rm e}^{t\widetilde{\square}_{{\rm AdS}^{15}(\mathbb{O})}}.
\end{equation*}
We deduce that the heat kernel of $\tilde{L}$ can be written as
\begin{equation}\label{eq:9}
p_t(r,\eta)=\int_{0}^{\pi}s_t(\eta,u)p^{\widetilde{\square}_{{\rm AdS}^{15}(\mathbb{O})}}_t(r,u)\sin^6u\,{\rm d}u,
\end{equation}
where $s_t$ is the heat kernel of \eqref{eq:LB_on_S^7} with respect to the measure $\sin^6\eta \, {\rm d}\eta$, $\eta\in [0,\pi)$, and $p^{\widetilde{\square}_{{\rm AdS}^{15}(\mathbb{O})}}_t(r,u)$ the heat kernel at $(0,0)$ of $\widetilde{\square}_{{\rm AdS}^{15}(\mathbb{O})}$ with respect to the measure in~\eqref{eq:measure}, i.e.,
\begin{equation*}
{\rm d}\mu(r,u)=\frac{{\pi}^7}{90}\sinh^7 r\cosh^7 r\sin^6 u \,{\rm d}r\, {\rm d}u, \qquad r\in [0,\infty),\qquad u\in[0,\pi].
\end{equation*}
In order to write \eqref{eq:9} more precisely, let us consider the analytic change of variables $\tau\colon (r,\eta) \rightarrow (r,{\rm i}\eta)$ that will be applied on functions of the type $f(r,\eta)=h(r){\rm e}^{-{\rm i}\lambda \eta}$, with $h$ smooth and compactly supported on $[0,\infty)$ and $\lambda>0$. Then as we saw in the proof of Proposition~\ref{prop:1} one can see that
\begin{equation*}
\widetilde{\square}_{{\rm AdS}^{15}(\mathbb{O})}(f\circ \tau)=\big(\widetilde{\triangle}_{\mathbb{H}^{15}}f\big)\circ \tau,
\end{equation*}
where
\begin{equation*}
\widetilde{\triangle}_{\mathbb{H}^{15}}=\widetilde{\triangle}_{\mathbb{O}H^1}+\frac{1}{\cosh^2 r}\widetilde{\triangle}_{P},\qquad \widetilde{\triangle}_{P}=\frac{{\partial}^2}{\partial {\eta}^2}+6\coth \eta\frac{{\partial}}{\partial {\eta}}.
\end{equation*}

Then, one deduces
\begin{equation*}
{\rm e}^{t\widetilde{L}}(f\circ \tau)={\rm e}^{t\widetilde{\triangle}_{\mathbb{S}^7}}{\rm e}^{t\widetilde{\square}_{{\rm AdS}^{15}(\mathbb{O})}}(f \circ \tau)={\rm e}^{t\widetilde{\triangle}_{\mathbb{S}^7}}\big(\big({\rm e}^{t{\widetilde{\triangle}_{\mathbb{H}^{15}}}}f\big) \circ \tau\big)=\big({\rm e}^{-t\widetilde{\triangle}_{P}}{\rm e}^{t{\widetilde{\triangle}_{\mathbb{H}^{15}}}}f\big) \circ \tau.
\end{equation*}
Now, since for every $f(r,\eta)=h(r){\rm e}^{-{\rm i}\lambda \eta}$,
\begin{equation*}
\big({\rm e}^{t\widetilde{\square}_{{\rm AdS}^{15}(\mathbb{O})}}f\big)(0,0)=\big({\rm e}^{t{\widetilde{\triangle}_{\mathbb{H}^{15}}}}\big)\big(f\circ \tau^{-1}\big)(0,0),
\end{equation*}
one deduces that for a function $h$ depending only on~$u$,
\begin{equation*}
\int_{0}^{\pi}h(u)p^{\widetilde{\square}_{{\rm AdS}^{15}(\mathbb{O})}}_t(r,u)\sin^6u\,{\rm d}u=\int_{0}^{\infty}h(-{\rm i}u)q_{t,15}(\cosh r \cosh u) \sinh^6u\,{\rm d}u.
\end{equation*}
Therefore, coming back to \eqref{eq:9}, one infers that using the analytic extension of~$s_t$ one must have
\begin{gather*}
\int_{0}^{\pi}s_t(\eta,u)p^{\square_{{\rm AdS}^{15}(\mathbb{O})}}_t(r,u)\sin^6u\,{\rm d}u=\int_{0}^{\infty}s_t(\eta,-{\rm i}u)q_{t,15}(\cosh r \cosh u)\sinh^6u\,{\rm d}u,
\end{gather*}
where $q_{t,15}$ is the Riemannian heat kernel on the real hyperbolic space $\mathbb{H}^{15}$ given in~\eqref{eq:riem.heat.kernel.on.H^15}.
\end{proof}

\subsection{Second integral representation}\label{section4.2}

\begin{Proposition}\label{prop:3}
For $r\geq 0$, $\eta\in [0,\pi]$, and $t>0$ we have
\begin{gather*}
p_t(r,\eta)= \int_0^\pi \int_{0}^{\infty} G_t(\eta, \varphi, u) q_{t,9}(\cosh r \cosh u) \sin^5 \varphi \, {\rm d}u \, {\rm d} \varphi .
\end{gather*}
where $q_{t,9}$ is the $9$-dimensional Riemannian heat kernel on the hyperbolic space $\mathbb{H}^{9}$:
\begin{gather*}
q_{t,9}(\cosh s):=\frac{{\rm e}^{-16t}}{(2\pi)^4 \sqrt{4\pi t}}\left(\frac{1}{\sinh s}\frac{{\rm d}}{{\rm d}s}\right)^{4}{\rm e}^{-s^2/4t},
\end{gather*}
and
\begin{gather}\label{function G}
 G_t(\eta, \varphi, u)
= \frac{15}{8} \sum_{m\geq 0}{\rm e}^{-(m(m+6)+33)t} (\cos \eta + {\rm i}\sin \eta \cos \varphi)^m \cosh((m+3)u).
\end{gather}
\end{Proposition}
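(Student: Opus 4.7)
The strategy is to derive the second representation from Proposition~\ref{prop:2} by lowering the dimension of the hyperbolic heat kernel from $15$ to $9$, combined with the Rodrigues formula for Jacobi polynomials and the Laplace (Gegenbauer) integral representation to handle the $\eta$-dependence. The starting point is the classical dimension-raising recursion $q_{t,n+2}(\cosh s)=-\tfrac{\mathrm e^{-nt}}{2\pi\sinh s}\partial_s q_{t,n}(\cosh s)$ for heat kernels on odd-dimensional real hyperbolic spaces; applying it three times with $n=9,11,13$ gives, setting $v=\cosh u$ at fixed $r$, a formula expressing $q_{t,15}(\cosh r\cosh u)$ as a multiple of $\partial_v^3 q_{t,9}(\cosh r\cosh u)$ times $\mathrm e^{-(9+11+13)t}=\mathrm e^{-33t}$. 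This is the source of the $\mathrm e^{-33t}$ in the definition of $G_t$.

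Next, I would substitute the spectral series for $s_t(\eta,\mathrm iu)$ into the integral of Proposition~\ref{prop:2}. For each mode $m$, the Rodrigues representation for $P_m^{5/2,5/2}$ (analytically continued via $x=\cosh u$ with the branch $(-1)^{5/2}=\mathrm i$) yields
\[
P_m^{5/2,5/2}(\cosh u)\sinh^5 u = \frac{1}{2^m m!}\partial_v^m \sinh^{2m+5}u.
\]
Combining this with the dimension-reduction formula and integrating by parts $m+3$ times in $v$ reduces the $u$-integrand for each mode to $\partial_v^{m+3}(v^2-1)^{m+5/2}\cdot q_{t,9}(\cosh r\cosh u)$. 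The boundary terms at $v=1$ vanish because $(v^2-1)^{m+5/2}$ has a zero of order $m+\tfrac{5}{2}$ there, and those at infinity vanish by rapid decay of $q_{t,9}$. A short induction on $m$ using $\partial_v=\sinh^{-1}u\,\partial_u$ together with the hyperbolic identities $\cosh^2 u+\sinh^2 u=\cosh 2u$ and $2\sinh u\cosh u=\sinh 2u$ establishes the key combinatorial identity
\[
\partial_v^{m+3}(v^2-1)^{m+5/2}=\frac{(2m+5)!}{2^{m+2}(m+2)!}\,\frac{\cosh((m+3)u)}{\sinh u},\qquad v=\cosh u,
\]
which produces the factor $\cosh((m+3)u)$ appearing in~$G_t$ and eliminates the $\sinh u$ weight against $\mathrm du=\mathrm dv/\sinh u$.

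To complete the derivation, the $\eta$-dependent factor $P_m^{5/2,5/2}(\cos\eta)$ is converted into a $\varphi$-integral via the classical Laplace integral representation for Gegenbauer polynomials: since $P_m^{5/2,5/2}\propto C_m^3$, one has $\int_0^\pi(\cos\eta+\mathrm i\sin\eta\cos\varphi)^m\sin^5\varphi\,\mathrm d\varphi = L_m\,P_m^{5/2,5/2}(\cos\eta)$ for an explicit Gamma-function constant $L_m$. Summing the resulting identity over $m$ reorganizes the expression into the double integral of Proposition~\ref{prop:3}. The main obstacle is the constant-tracking in this last step: one must verify that the product of the spectral weight $c_m$ from the series for $s_t$, the Rodrigues factor $(2^m m!)^{-1}$, the dimension-raising prefactor $(-1)^3/(2\pi)^3$, the combinatorial constant $(2m+5)!/(2^{m+2}(m+2)!)$, and the Gegenbauer constant $L_m$ collapses — after cancellation of the various Gamma functions and powers of $2$ — to the single mode-independent prefactor $\tfrac{15}{8}$ in~$G_t$. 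A secondary technical check is the justification of the iterated integration by parts, noting in particular that $\partial_v^{m+3}(v^2-1)^{m+5/2}$ develops an integrable $(v-1)^{-1/2}$ singularity at $v=1$ which is harmless against the smooth factor $q_{t,9}(\cosh r\cosh u)$.
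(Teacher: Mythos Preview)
Your route is sound but genuinely different from the paper's. The paper does \emph{not} start from Proposition~\ref{prop:2}; instead it expands $p_t(r,\eta)=\sum_{m\ge 0} f_m(t,r)h_m(\eta)$ in the zonal spherical harmonics $h_m(\eta)=\tfrac{15}{16}\int_0^\pi(\cos\eta+{\rm i}\sin\eta\cos\varphi)^m\sin^5\varphi\,{\rm d}\varphi$ on $\mathbb S^7$, so that each $f_m$ solves the radial heat equation for the Jacobi-type operator $L_m=\partial_r^2+(7\coth r+7\tanh r)\partial_r+m(m+6)\cosh^{-2}r+49$. It then imports the explicit wave kernel for $L_m$ from Intissar--Ould Moustapha~\cite{AIMVOM97}, converts wave to heat via the spectral identity $e^{tL}=(4\pi t)^{-1/2}\int_{\mathbb R}e^{-s^2/4t}\cos\!\big(s\sqrt{-L}\,\big)\,{\rm d}s$, integrates by parts four times in $s$, changes variable $\cosh s=\cosh r\cosh u$, and finally recognises ${}_2F_1\!\big(m+3,-m-3,\tfrac12;\tfrac{1-\cosh u}{2}\big)$ as the Chebyshev polynomial $T_{m+3}(\cosh u)=\cosh((m+3)u)$.

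What each approach buys: your derivation is self-contained---it needs only Proposition~\ref{prop:2}, the Millson-type recursion $q_{t,n+2}=-\tfrac{e^{-nt}}{2\pi}\partial_{\cosh s}q_{t,n}$, and classical Jacobi/Gegenbauer identities---whereas the paper's argument relies on an external wave-kernel formula but is logically independent of Proposition~\ref{prop:2} and makes the appearance of $\cosh((m+3)u)$ transparent through the Chebyshev identification. One point to watch in your scheme: when you pass from $q_{t,15}(\cosh r\cosh u)$ to $\partial_v^3 q_{t,9}(\cosh r\,v)$ with $v=\cosh u$, the chain rule produces an extra factor $\cosh^{-3}r$ (since $\partial_{\cosh s}=\cosh^{-1}r\,\partial_v$); the same factor arises in the paper's change of variable through the $\cosh^{-3}(d(w,y))$ in $K_m$, so be sure to track it consistently when you collapse the constants to $\tfrac{15}{8}$.
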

\begin{proof}
The strategy of the following method appeals to some results proved in~\cite{AIMVOM97}. Firstly, we decompose the subelliptic heat kernel in the~$\eta$ variable with respect to the basis of normalized eigenfunctions of $\tilde{\triangle}_{S^7}=\frac{{\partial}^2}{\partial {\eta}^2}+6\cot \eta\frac{{\partial}}{\partial {\eta}}$.
Accordingly,
\begin{equation*}
p_t(r,\eta)=\sum_{m\geq 0} f_m(t,r)h_m(\eta),
\end{equation*}
where for each $m$, $h_m$ is given by
\begin{equation*}
h_m(\eta)=\frac{15}{16}\int_{0}^{\pi}{(\cos \eta + {\rm i}\sin \eta \cos \varphi)}^m{\sin^5 \varphi}\, {\rm d} \varphi
\end{equation*}
and $f_m(t,\cdot)$ solves the following heat equation
\begin{align*}
\frac{\partial}{\partial t}f_m(t,r)& =\left( \frac{{\partial}^2}{\partial {r}^2}+(7\coth r+7\tanh r)\frac{{\partial}}{\partial {r}}-m(m+6){\tanh^2 r} \right) f_m(t,r) \\
&=\left( \frac{{\partial}^2}{\partial {r}^2}+(7\coth r+7\tanh r)\frac{{\partial}}{\partial {r}}+\frac{m(m+6)}{\cosh^2 r}-m(m+6) \right) f_m(t,r).
\end{align*}
We consider then the operator
\begin{equation*}
L_m:=\frac{{\partial}^2}{\partial {r}^2}+(7\coth r+7\tanh r)\frac{{\partial}}{\partial {r}}+\frac{m(m+6)}{\cosh^2 r}+49,
\end{equation*}
which was studied in \cite[p.~229]{AIMVOM97}.
From \cite[Theorem~2]{AIMVOM97}, with $\alpha=3+\frac{m}{2}, \beta=-\frac{m}{2}$, we deduce that the solution to the wave Cauchy problem associated with the subelliptic Laplacian is given $f\in C^{\infty}_{0}\big(\mathbb{O}H^1\big)$ by
\begin{equation*}
\cos \big(s\sqrt{-L_{m}}\big)(f)(w)=\frac{-\sinh s}{(2\pi)^{4}}\left(\frac{1}{\sinh s }\frac{{\rm d}}{{\rm d}s}\right)^{4}\int_{\mathbb{O}H^1}K_m(s,w,y)f(y)\frac{{\rm d}y}{\big(1-||y||^2\big)^{8}},
\end{equation*}
where
\begin{gather*}
K_m(s,w,y) =\frac{(1-\overline{\langle w,y\rangle})^{3+m/2}}{(1-\langle w,y\rangle)^{m/2}}\frac{1}{\cosh^3(d(w,y))\sqrt{\cosh^2(s)-\cosh^2(d(w,y))}} \\
\hphantom{K_m(s,w,y) =}{} \times {_2}F_{1}\left(m+3,-m-3,\frac{1}{2};\frac{\cosh(d(w,y))-\cosh(s)}{2\cosh(d(w,y)) }\right),
\end{gather*}
where $_2 F_{1}$ is the Gauss hypergeometric function and ${\rm d}y$ stands for the Lebesgue measure in~$\mathbb{R}^8$. Using the spectral formula
\begin{equation*}
{\rm e}^{tL}=\frac{1}{\sqrt{4\pi t}}\int_{\mathbb{R}}{\rm e}^{-s^2/(4t)}\cos\big(s\sqrt{-L}\big)\,{\rm d}s,
\end{equation*}
which holds for any non positive self-adjoint operator, we deduce that the solution to the heat Cauchy problem associated with~$L_m$:
\begin{gather*}
{\rm e}^{t L_m}(f)(w) =\frac{{\rm e}^{-m(m+6)t-7^2 t}}{\sqrt{4\pi t}(2\pi)^{4}}
\int_{\mathbb{R}} {\rm d}s (-\sinh s){\rm e}^{-s^2/(4t)}\\
\hphantom{{\rm e}^{t L_m}(f)(w) =}{}\times \left(\frac{1}{\sinh s}\frac{{\rm d}}{{\rm d}s}\right)^{4} \int_{\mathbb{O}H^1} K_m(s,w,y)f(y)\frac{{\rm d}y}{\big(1-||y||^2\big)^{8}}.
\end{gather*}

Performing integration by parts $4$-times,
\begin{gather*}
\int_{\mathbb{R}} {\rm d}s (-\sinh s)\left(\frac{1}{\sinh s}\frac{{\rm d}}{{\rm d}s}\right)^{4} {\rm e}^{-s^2/(4t)} \int_{\mathbb{O}H^1} K_m(s,w,y)f(y)\frac{{\rm d}y}{\big(1-||y||^2\big)^{8}}
\\
\qquad{}=\int_{\mathbb{O}H^1}f(y)\frac{{\rm d}y}{(1-||y||^2)^{8}} \int_{\mathbb{R}} {\rm d}s (-\sinh s)K_m(s,w,y)\left(\frac{1}{\sinh s }\frac{{\rm d}}{{\rm d}s}\right)^{4}{\rm e}^{-s^2/4t}
\\
\qquad{} =2\int_{\mathbb{O}H^1}f(y)\frac{{\rm d}y}{\big(1-||y||^2\big)^{8}} \int_{d(w,y)}^{\infty}{\rm d}(\cosh(s))K_m(s,w,y)\left(\frac{1}{\sinh s }\frac{{\rm d}}{{\rm d}s}\right)^{4}{\rm e}^{-s^2/4t}.
\end{gather*}
Thus we get
\begin{equation*}
{\rm e}^{tL_m}(f)(0)=2 {\rm e}^{-(m(m+6)+33)t}\int_{\mathbb{O}H^1} f(y)\frac{{\rm d}y}{\big(1-||y||^2\big)^{8}}\int_{d(0,y)}^{\infty}{\rm d}(\cosh s)K_m(s,0,y)q_{t,9}(\cosh s).
\end{equation*}
As a result, the subelliptic heat kernel of $L_m$ reads
\begin{gather*}
\frac{{\rm d}y}{\big(1-||y||^2\big)^{8}}\int_{d(0,y)}^{\infty}{\rm d}(\cosh s)K_m(s,0,y)q_{t,9}(\cosh s)
\\
\qquad{} ={\rm d}r \sinh^{7}r\cosh^7 r\int_{r}^{\infty}{\rm d}(\cosh s)K_m(s,0,y)q_{t,9}(\cosh s).
\end{gather*}
By changing the variable $\cosh s=\cosh r \cosh u$ for $u\geq 0$, the last expression becomes
\begin{equation*}
{\rm d}r \sinh^{7}r\cosh^7 r\int_{0}^{\infty} {_2} F_{1}\left(m+3,-m-3,\frac{1}{2};\frac{1-\cosh u }{2 }\right) q_{t,9}(\cosh r\cosh u)\,{\rm d}u.
\end{equation*}
Therefore $p_t(r,\eta)$ has the integral representation
\begin{equation*}
2\sum_{m\geq 0}{\rm e}^{-(m(m+6)+33)t}h_m(\eta) \int_{0}^{\infty} {_2} F_{1}\left(m+3,-m-3,\frac{1}{2};\frac{1-\cosh u }{2 }\right) q_{t,9}(\cosh r \cosh u)\,{\rm d}u.
\end{equation*}

Now, notice that $_2 F_{1} \big(m+3,-m-3,\frac{1}{2};\frac{1-\cosh u }{2 }\big)$ is simply the Cheybyshev polynomial of the first kind
\begin{equation*}
T_{m+3}(x)={_2}F_{1} \left(m+3,-m-3,\frac{1}{2};\frac{1-x }{2 }\right),
\end{equation*}
for all $x\in \mathbb{C}$. Therefore, one has
\begin{equation*}
_2 F_{1} \left(m+3,-m-3,\frac{1}{2};\frac{1-\cosh u }{2 }\right)=T_{m+3}(\cosh u)=\cosh((m+3)u),
\end{equation*}
and the proof is over.
\end{proof}

\subsection*{Acknowledgements}
F.B.~is partially funded by the NSF grant DMS-1901315.

\pdfbookmark[1]{References}{ref}
\LastPageEnding

\end{document}